\newtheorem{theorem}{Theorem}
\newtheorem{lemma}[theorem]{Lemma}
\newtheorem{problem}[theorem]{Problem}
\newtheorem{proposition}[theorem]{Proposition}
\newenvironment{proof}{\paragraph{Proof:}}{\hfill$\square$}
\DeclareMathAlphabet{\ams}{U}{msb}{m}{n}
   \let\temp\relax
   \let\temp 
 \chardef\EPSFCatAt\the\catcode`\@
 \chardef\C@tColon\the\catcode`\:
 \chardef\C@tSemicolon\the\catcode`\;
 \chardef\C@tQmark\the\catcode`\?
 \chardef\C@tEmark\the\catcode`\!
 \chardef\C@tDqt\the\catcode`\"
 \def\PunctOther@{\catcode`\:=12
   \catcode`\;=12 \catcode`\?=12 \catcode`\!=12 \catcode`\"=12}
 \let\wlog@ld\wlog 
 \def\wlog#1{\relax} 
 \newdimen\XShift@ \newdimen\YShift@ 
 \newtoks\Realtoks
 \newdimen\Wd@ \newdimen\Ht@
 \newdimen\Wd@@ \newdimen\Ht@@
 \newdimen\TT@
 \newdimen\LT@
 \newdimen\BT@
 \newdimen\RT@
 \newdimen\XSlide@ \newdimen\YSlide@ 
 \newdimen\TheScale  
 \newdimen\FigScale  
 \newdimen\ForcedDim@@
 \newtoks\EPSFDirectorytoks@
 \newtoks\EPSFNametoks@
 \newtoks\BdBoxtoks@
 \newtoks\LLXtoks@  
 \newtoks\LLYtoks@
 \newif\ifNotIn@
 \newif\ifForcedDim@
 \newif\ifForceOn@
 \newif\ifForcedHeight@
 \newif\ifPSOrigin
 \newread\EPSFile@ 
  \def\ms@g{\immediate\write16}
 \newif\ifIN@\def\IN@{\expandafter\INN@\expandafter}
  \long\def\INN@0#1@#2@{\long\def\NI@##1#1##2##3\ENDNI@
    {\ifx\m@rker##2\IN@false\else\IN@true\fi}%
     \expandafter\NI@#2@@#1\m@rker\ENDNI@}
  \def\m@rker{\m@@rker}
  \newtoks\Initialtoks@  \newtoks\Terminaltoks@
  \def\SPLIT@{\expandafter\SPLITT@\expandafter}
  \def\SPLITT@0#1@#2@{\def\TTILPS@##1#1##2@{%
     \Initialtoks@{##1}\Terminaltoks@{##2}}\expandafter\TTILPS@#2@}
  \newtoks\Trimtoks@
 \def\ForeTrim@{\expandafter\ForeTrim@@\expandafter}
 \def\ForePrim@0 #1@{\Trimtoks@{#1}}
 \def\ForeTrim@@0#1@{\IN@0\m@rker. @\m@rker.#1@%
     \ifIN@\ForePrim@0#1@%
     \else\Trimtoks@\expandafter{#1}\fi}
  \def\Trim@0#1@{%
      \ForeTrim@0#1@%
      \IN@0 @\the\Trimtoks@ @%
        \ifIN@ 
             \SPLIT@0 @\the\Trimtoks@ @\Trimtoks@\Initialtoks@
             \IN@0\the\Terminaltoks@ @ @%
                 \ifIN@
                 \else \Trimtoks@ {FigNameWithSpace}%
                 \fi
        \fi
      }
   \newtoks\pt@ks
   \def\getpt@ks 0.0#1@{\pt@ks{#1}}
  \newtoks\Realtoks
  \def\Real#1{%
    \dimen2=#1%
      \SPLIT@0\the\pt@ks @\the\dimen2@
       \Realtoks=\Initialtoks@
            }
   \newdimen\Product
   \def\Mult#1#2{%
     \dimen4=#1\relax
     \dimen6=#2%
     \Real{\dimen4}%
     \Product=\the\Realtoks\dimen6%
        }
 \newdimen\Inverse
 \newdimen\hmxdim@ \hmxdim@=8192pt
 \def\Invert#1{%
  \Inverse=\hmxdim@
  \dimen0=#1%
  \divide\Inverse \dimen0%
  \multiply\Inverse 8}
   \def\Rescale#1#2#3{
              \divide #1 by 100\relax
              \dimen2=#3\divide\dimen2 by 100 \Invert{\dimen2}%
              \Mult{#1}{#2}%
              \Mult\Product\Inverse 
              #1=\Product}
  \def\Scale#1{\dimen0=\TheScale %
      \divide #1 by  1280 
      \divide \dimen0 by 5120 %
      \multiply#1 by \dimen0 
      \divide#1 by 10   
     }
 \newbox\scrunchbox
 \def\Scrunched#1{{\setbox\scrunchbox\hbox{#1}%
   \wd\scrunchbox=0pt
   \ht\scrunchbox=0pt
   \dp\scrunchbox=0pt
   \box\scrunchbox}}
 \def\Shifted@#1{%
   \vbox {\kern-\YShift@
       \hbox {\kern\XShift@\hbox{#1}\kern-\XShift@}%
           \kern\YShift@}}
 \def\cBoxedEPSF#1{{\leavevmode 
   \ReadNameAndScale@{#1}%
   \SetEPSFSpec@
   \ReadEPSFile@ \ReadBdB@x  
     \TrimFigDims@ 
     \CalculateFigScale@  
     \ScaleFigDims@
     \SetInkShift@
   \hbox{$\mathsurround=0pt\relax
         \vcenter{\hbox{%
             \FrameSpider{\hskip-.4pt\vrule}%
             \vbox to \Ht@{\offinterlineskip\parindent=\z@%
                \FrameSpider{\vskip-.4pt\hrule}\vfil 
                \hbox to \Wd@{\hfil}%
                \vfil
                \InkShift@{\EPSFSpecial{\EPSFSpec@}{\FigSc@leReal}}%
             \FrameSpider{\hrule\vskip-.4pt}}%
         \FrameSpider{\vrule\hskip-.4pt}}}%
     $\relax}%
    \CleanRegisters@ 
    \ms@g{ *** Box composed for the %
         EPS file \the\EPSFNametoks@}%
    }}
 \def\tBoxedEPSF#1{\setbox4\hbox{\cBoxedEPSF{#1}}%
     \setbox4\hbox{\raise -\ht4 \hbox{\box4}}%
     \box4
      }
 \def\bBoxedEPSF#1{\setbox4\hbox{\cBoxedEPSF{#1}}%
     \setbox4\hbox{\raise \dp4 \hbox{\box4}}%
     \box4
      }
  \let\BoxedEPSF\cBoxedEPSF
   \let\BoxedArt\BoxedEPSF
  \def\gLinefigure[#1scaled#2]_#3{%
        \BoxedEPSF{#3 scaled #2}}
  \def\EPSFxsize{\afterassignment\ForceW@\ForcedDim@@}
      \def\ForceW@{\ForcedDim@true\ForcedHeight@false}
  \def\EPSFysize{\afterassignment\ForceH@\ForcedDim@@}
      \def\ForceH@{\ForcedDim@true\ForcedHeight@true}
  \def\EmulateRokicki{%
       \let\epsfbox\bBoxedEPSF \let\epsffile\bBoxedEPSF
       \let\epsfxsize\EPSFxsize \let\epsfysize\EPSFysize} 
 \def\ReadNameAndScale@#1{\IN@0 scaled@#1@
   \ifIN@\ReadNameAndScale@@0#1@%
   \else \ReadNameAndScale@@0#1 scaled\DefaultMilScale @%
   \fi}
 \def\ReadNameAndScale@@0#1scaled#2@{
    \let\OldBackslash@\\%
    \def\\{\OtherB@ckslash}%
    \edef\temp@{#1}%
    \Trim@0\temp@ @%
    \EPSFNametoks@\expandafter{\the\Trimtoks@ }%
    \FigScale=#2 pt%
    \let\\\OldBackslash@
    }
 \def\SetDefaultEPSFScale#1{%
      \global\def\DefaultMilScale{#1}}
 \def \SetBogusBbox@{%
     \global\BdBoxtoks@{ BoundingBox:0 0 100 100 }%
     \global\def\BdBoxLine@{ BoundingBox:0 0 100 100 }%
     \ms@g{ !!! Will use placeholder !!!}%
     }
\gdef\P@S@{

 \def\ReadEPSFile@{
     \openin\EPSFile@\EPSFSpec@
     \relax  
  \ifeof\EPSFile@
     \ms@g{}%
     \ms@g{ !!! EPS FILE \the\EPSFDirectorytoks@
       \the\EPSFNametoks@\space WAS NOT FOUND !!!}%
     \SetBogusBbox@
  \else
   \begingroup
   \catcode`\%=12\catcode`\:=12\catcode`\!=12
   \catcode"00=14 \catcode"7F=14 \catcode`\\=14 
   \global\read\EPSFile@ to \BdBoxLine@ 
   \IN@0\P@S@ @\BdBoxLine@ @%
   \ifIN@ 
     \NotIn@true
     \loop   
       \ifeof\EPSFile@\NotIn@false 
         \ms@g{}%
         \ms@g{ !!! BoundingBox NOT FOUND IN %
            \the\EPSFDirectorytoks@\the\EPSFNametoks@\space!!! }%
         \SetBogusBbox@
       \else\global\read\EPSFile@ to \BdBoxLine@
       \fi
       \global\BdBoxtoks@\expandafter{\BdBoxLine@}%
       \IN@0BoundingBox:@\the\BdBoxtoks@ @%
       \ifIN@\NotIn@false\fi%
     \ifNotIn@
     \repeat
   \else
         \ms@g{}%
         \ms@g{ !!! \the\EPSFNametoks@\space is not PostScript.}%
         \ms@g{ !!! It should begin with the "\P@S@". }%
         \ms@g{ !!! Also, all other header lines until }%
         \ms@g{ !!!  "\pct@@ EndComments"  should begin with "\pct@@". }%
         \SetBogusBbox@
   \fi
  \endgroup\relax
  \fi
  \closein\EPSFile@ 
   }

  \def\ReadBdB@x{
   \expandafter\ReadBdB@x@\the\BdBoxtoks@ @}
  
  \def\ReadBdB@x@#1BoundingBox:#2@{
    \ForeTrim@0#2@%
    \IN@0atend@\the\Trimtoks@ @%
       \ifIN@\Trimtoks@={0 0 100 100 }%
         \ms@g{}%
         \ms@g{ !!! BoundingBox not found in %
         \the\EPSFDirectorytoks@\the\EPSFNametoks@\space !!!}%
         \ms@g{ !!! It must not be at end of EPSF !!!}%
         \ms@g{ !!! Will use placeholder !!!}%
       \fi
    \expandafter\ReadBdB@x@@\the\Trimtoks@ @%
   }
    
  \def\ReadBdB@x@@#1 #2 #3 #4@{
      \Wd@=#3bp\advance\Wd@ by -#1bp%
      \Ht@=#4bp\advance\Ht@ by-#2bp%
       \Wd@@=\Wd@ \Ht@@=\Ht@ 
       \LLXtoks@={#1}\LLYtoks@={#2}
      \ifPSOrigin\XShift@=-#1bp\YShift@=-#2bp\fi 
     }

   %
   \def\G@bbl@#1{}
   \bgroup
     \global\edef\OtherB@ckslash{\expandafter\G@bbl@\string\\}
   \egroup

  \def\SetEPSFDirectory{
           \bgroup\PunctOther@\relax
           \let\\\OtherB@ckslash
           \SetEPSFDirectory@}

 \def\SetEPSFDirectory@#1{
    \edef\temp@{#1}%
    \Trim@0\temp@ @
    \global\toks1\expandafter{\the\Trimtoks@ }\relax
    \egroup
    \EPSFDirectorytoks@=\toks1
    }

 \def\SetEPSFSpec@{%
     \bgroup
     \let\\=\OtherB@ckslash
     \global\edef\EPSFSpec@{%
        \the\EPSFDirectorytoks@\the\EPSFNametoks@}%
     \global\edef\EPSFSpec@{\EPSFSpec@}%
     \egroup}

  %
 \def\TrimTop#1{\advance\TT@ by #1}
 \def\TrimLeft#1{\advance\LT@ by #1}
 \def\TrimBottom#1{\advance\BT@ by #1}
 \def\TrimRight#1{\advance\RT@ by #1}

 \def\TrimBoundingBox#1{%
   \TrimTop{#1}%
   \TrimLeft{#1}%
   \TrimBottom{#1}%
   \TrimRight{#1}%
       }

 \def\TrimFigDims@{%
    \advance\Wd@ by -\LT@ 
    \advance\Wd@ by -\RT@ \RT@=\z@
    \advance\Ht@ by -\TT@ \TT@=\z@
    \advance\Ht@ by -\BT@ 
    }

  %
  \def\ForceWidth#1{\ForcedDim@true
       \ForcedDim@@#1\ForcedHeight@false}
  
  \def\ForceHeight#1{\ForcedDim@true
       \ForcedDim@@=#1\ForcedHeight@true}

  \def\ForceOn{\ForceOn@true}
  \def\ForceOff{\ForceOn@false\ForcedDim@false}
  
  \def\CalculateFigScale@{%
     \ifForcedDim@\FigScale=1000pt
           \ifForcedHeight@
                \Rescale\FigScale\ForcedDim@@\Ht@
           \else
                \Rescale\FigScale\ForcedDim@@\Wd@
           \fi
     \fi
     \Real{\FigScale}%
     \edef\FigSc@leReal{\the\Realtoks}%
     }
   
  \def\ScaleFigDims@{\TheScale=\FigScale
      \ifForcedDim@
           \ifForcedHeight@ \Ht@=\ForcedDim@@  \Scale\Wd@
           \else \Wd@=\ForcedDim@@ \Scale\Ht@
           \fi
      \else \Scale\Wd@\Scale\Ht@        
      \fi
      \ifForceOn@\relax\else\global\ForcedDim@false\fi
      \Scale\LT@\Scale\BT@  
      \Scale\XShift@\Scale\YShift@
      }
      
 \def\HideDisplacementBoxes{\global\def\FrameSpider##1{\null}}
 \def\ShowDisplacementBoxes{\global\def\FrameSpider##1{##1}}
 \let\HideFigureFrames\HideDisplacementBoxes 
 \let\ShowFigureFrames\ShowDisplacementBoxes
 \ShowDisplacementBoxes
 
 \def\hSlide#1{\advance\XSlide@ by #1}
 \def\vSlide#1{\advance\YSlide@ by #1}
 
  \def\SetInkShift@{%
            \advance\XShift@ by -\LT@
            \advance\XShift@ by \XSlide@
            \advance\YShift@ by -\BT@
            \advance\YShift@ by -\YSlide@
             }
  \def\InkShift@#1{\Shifted@{\Scrunched{#1}}}
 
   %
  \def\CleanRegisters@{%
      \globaldefs=1\relax
        \XShift@=\z@\YShift@=\z@\XSlide@=\z@\YSlide@=\z@
        \TT@=\z@\LT@=\z@\BT@=\z@\RT@=\z@
      \globaldefs=0\relax}

 
 \def\SetTexturesEPSFSpecial{\PSOriginfalse
  \gdef\EPSFSpecial##1##2{\relax
    \edef\specialtemp{##2}%
    \SPLIT@0.@\specialtemp.@\relax
    \special{illustration ##1 scaled
                        \the\Initialtoks@}}}
 
  \def\SetUnixCoopEPSFSpecial{\PSOrigintrue 
   \gdef\EPSFSpecial##1##2{%
      \dimen4=##2pt
      \divide\dimen4 by 1000\relax
      \Real{\dimen4}
      \edef\Aux@{\the\Realtoks}%
      \includegraphics{##1\space}}}

  \def\SetBechtolsheimEPSFSpecial@{
   \PSOrigintrue
   \special{\DriverTag@ Include0 "psfig.pro"}%
   \gdef\EPSFSpecial##1##2{%
      \dimen4=##2pt 
      \divide\dimen4 by 1000\relax
      \Real{\dimen4} 
      \edef\Aux@{\the\Realtoks}
      \special{\DriverTag@ Literal "10 10 0 0 10 10 startTexFig
           \the\mag\space 1000 div 
           dup 3.25 neg mul 1 index .25 neg mul translate 
           \Aux@\space mul dup scale "}%
      \special{\DriverTag@ Include1 "##1"}%
      \special{\DriverTag@ Literal "endTexFig "}%
        }}

  \def\SetBechtolsheimDVITPSEPSFSpecial{\def\DriverTag@{dvitps: }%
      \SetBechtolsheimEPSFSpecial@}

  \def\SetBechtolsheimDVI2PSEPSFSSpecial{\def\DriverTag@{DVI2PS: }%
      \SetBechtolsheimEPSFSpecial@}

  \def\SetLisEPSFSpecial{\PSOrigintrue 
   \gdef\EPSFSpecial##1##2{%
      \dimen4=##2pt
      \divide\dimen4 by 1000\relax
      \Real{\dimen4}
      \edef\Aux@{\the\Realtoks}%
      \special{pstext="10 10 0 0 10 10 startTexFig\space
           \the\mag\space 1000 div \Aux@\space mul 
           \the\mag\space 1000 div \Aux@\space mul scale"}%
      \includegraphics{##1}%
      \special{pstext=endTexFig}%
        }}

  \def\SetRokickiEPSFSpecial{\PSOrigintrue 
   \gdef\EPSFSpecial##1##2{%
      \dimen4=##2pt
      \divide\dimen4 by 10\relax
      \Real{\dimen4}
      \edef\Aux@{\the\Realtoks}%
      \includegraphics{##1}}}

  \def\SetInlineRokickiEPSFSpecial{\PSOrigintrue 
   \gdef\EPSFSpecial##1##2{%
      \dimen4=##2pt
      \divide\dimen4 by 1000\relax
      \Real{\dimen4}
      \edef\Aux@{\the\Realtoks}%
      \special{ps::[begin] 10 10 0 0 10 10 startTexFig\space
           \the\mag\space 1000 div \Aux@\space mul 
           \the\mag\space 1000 div \Aux@\space mul scale}%
      \special{ps: plotfile ##1}%
      \special{ps::[end] endTexFig}%
        }}

 \def\SetOzTeXEPSFSpecial{\PSOrigintrue
 \gdef\EPSFSpecial##1##2{%
 \dimen4=##2pt
 \divide\dimen4 by 1000\relax
 \Real{\dimen4}
 \edef\Aux@{\the\Realtoks}
 \special{epsf=\string"##1\string"\space scale=\Aux@}%
 }} 

  \def\SetPSprintEPSFSpecial{\PSOriginFALSE 
   \gdef\EPSFSpecial##1##2{
     \special{##1\space 
       ##2 1000 div \the\mag\space 1000 div mul
       ##2 1000 div \the\mag\space 1000 div mul scale
       \the\LLXtoks@\space neg \the\LLYtoks@\space neg translate
       }}}

 \def\SetArborEPSFSpecial{\PSOriginfalse 
   \gdef\EPSFSpecial##1##2{%
     \edef\specialthis{##2}%
     \SPLIT@0.@\specialthis.@\relax 
     \special{ps: epsfile ##1\space \the\Initialtoks@}}}

 \def\SetClarkEPSFSpecial{\PSOriginfalse 
   \gdef\EPSFSpecial##1##2{%
     \Rescale {\Wd@@}{##2pt}{1000pt}%
     \Rescale {\Ht@@}{##2pt}{1000pt}%
     \special{dvitops: import 
           ##1\space\the\Wd@@\space\the\Ht@@}}}

  \let\SetDVIPSONEEPSFSpecial\SetUnixCoopEPSFSpecial
  \let\SetDVIPSoneEPSFSpecial\SetUnixCoopEPSFSpecial

  \def\SetBeebeEPSFSpecial{
   \PSOriginfalse%
   \gdef\EPSFSpecial##1##2{\relax
    \special{language "PS",
      literal "##2 1000 div ##2 1000 div scale",
      position = "bottom left",
      include "##1"}}}
  \let\SetDVIALWEPSFSpecial\SetBeebeEPSFSpecial

  \def\SetNorthlakeEPSFSpecial{\PSOrigintrue
   \gdef\EPSFSpecial##1##2{%
     \edef\specialthis{##2}%
     \SPLIT@0.@\specialthis.@\relax 
     \special{insert ##1,magnification=\the\Initialtoks@}}}

 \def\SetStandardEPSFSpecial{%
   \gdef\EPSFSpecial##1##2{%
     \ms@g{}
     \ms@g{%
       !!! Sorry! There is still no standard for \string%
       \special\space EPSF integration !!!}%
     \ms@g{%
      --- So you will have to identify your driver using a command}%
     \ms@g{%
      --- of the form \string\Set...EPSFSpecial, in order to get}%
     \ms@g{%
      --- your graphics to print.  See BoxedEPS.doc.}%
     \ms@g{}
     \gdef\EPSFSpecial####1####2{}
     }}

  \SetStandardEPSFSpecial 
 
 \let\wlog\wlog@ld 

 \catcode`\:=\C@tColon
 \catcode`\;=\C@tSemicolon
 \catcode`\?=\C@tQmark
 \catcode`\!=\C@tEmark
 \catcode`\"=\C@tDqt

 \catcode`\@=\EPSFCatAt

 %
 %
 %
 %
 %

\SetEPSFDirectory{} 
\HideDisplacementBoxes
\SetRokickiEPSFSpecial  

\begin{document}

\title{ Answer To Graham Higman's Question Regarding Januarials}
\author{Qaiser Mushtaq and Saadia Mehwish }

\begin{center}
{\LARGE Januarials of simple and general type}

Saadia Mehwish$^{1}$ and Qaiser Mushtaq$^{2}$
\end{center}

$^{1}${\footnotesize Department\ of\ Mathematics,}${\footnotesize \ }$%
{\footnotesize The Islamia University of Bahawalpur, Bahawalpur 63100,
Pakistan.}

\ {\footnotesize E-mail: saadiamehwish@gmail.com.}

$^{2}${\footnotesize The Islamia University of Bahawalpur, Bahawalpur 63100,
Pakistan.}

\ {\footnotesize E-mail: pir\_qmushtaq@yahoo.com.}\ 

\begin{center}
\bigskip \textbf{Abstract}
\end{center}

{\small Januarials were defined by Graham Higman in his last series of
lectures. In this paper we answer some questions posed by Higman in these
lectures.}

\begin{center}
{\small \bigskip }
\end{center}

{\tiny AMS Classification[2010]: Primary 05C25; Secondary 20G40}

{\tiny Keywords: Coset diagrams, Januarials, Triangle groups, Hecke groups
and genus.}

{\tiny \bigskip }

\section{Introduction}

\bigskip Coset diagrams were introduced by Graham Higman as a diagram
representing an action of a particular group on a set or a space. 
Several papers, namely \cite{br1, br, qmf, ww, at} appeared in which these
coset diagrams were used to prove interesting topological and group
theoretical results. Q. Mushtaq \cite{Qm} laid the foundations of coset
diagrams for the action of the modular group on finite Galois fields and
real quadratic irrational number fields. 

Graham Higman introduced the concept of januarials in his lectures at Oxford
in 2000, which turned out to be his last -- see \cite{jan}. In these he
posed several questions. This paper answers two of them -- see Proposition %
\ref{p} and Theorem \ref{t} below. 

\section{Januarials}

\label{januarials}

Let $\Delta=\Delta(2,k,\ell)$ be the abstract group with presentation 
\begin{equation*}
\Delta (2,k,\ell )=\left\langle x,y:x^{2}=y^{k}=(xy)^{\ell}=1\right\rangle
\end{equation*}
where $k,\ell\in\ams{Z}^{\geq 2}\cup\infty$. Such groups are called triangle
groups as they act discretely on the $2$-sphere, Euclidean plane or
hyperbolic plane with fundamental region a triangle. The $\Delta(2,k,\infty)$
are called \emph{Hecke groups\/}; in particular when $k=3$ we obtain the
classical modular group.

Suppose that we have an action of $\Delta $ on some set $S$. A \emph{coset
graph\/} $\Gamma $ is a directed graph in the plane depicting the action as
follows: the vertices of the graph are the elements of the set $S$; if two
vertices are transposed by the action of $x$ then there is an undirected $x$%
-edge connecting them; vertices fixed by $x$ have no $x$-edge incident with
them; there is a directed $y$-edge from $u$ to $v$ when $u$ is sent to $v$
under the action of $y$.

Now associate to each vertex of the coset graph a permutation of the
vertices incident to it as in Figure \ref{figure:123}.

\begin{figure}
  \centering
\BoxedEPSF{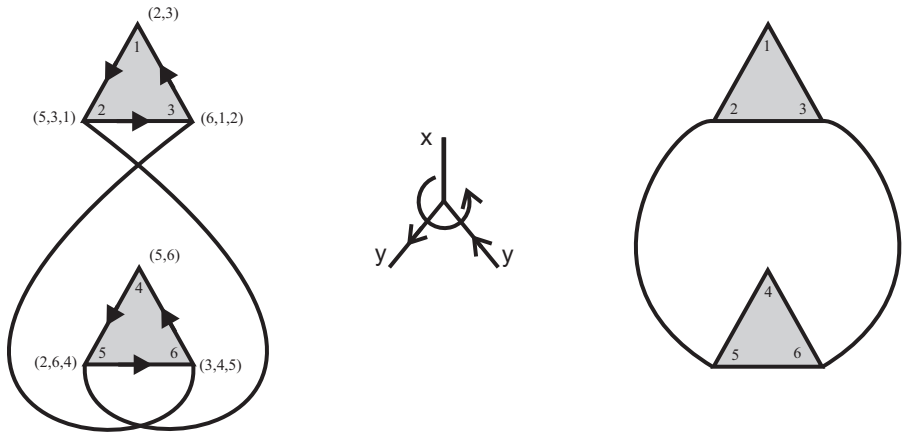 scaled 750}
  \caption{$2$-cell
embedding using the method described by \protect\cite[Theorem
6.47]{white84}}
\label{figure:123}
\end{figure}


Then by \cite[Theorem 6.47]{white84} we have a uniquely defined $2$-cell
embedding of $\Gamma $ into an orientable surface where the $2$-cells have
boundary labels $y^{n}$, with $n$ dividing $k$, or $(xy)^{m}$, with $m$
dividing $\ell $. Call such $2$-cells \emph{$y$-faces\/} or \emph{$xy$%
-faces\/} repectively.

The result is called a \emph{coset diagram\/}. The picture above shows a $%
\Gamma$ depicting the action of $\Delta(2,3,3)$ on a set of size six and the
resulting diagram embedded in the $2$-sphere.

From now we can omit the orientations on the $y$-edges of a coset diagram
with the understanding that the boundary edges of each $y$-face are oriented
anticlockwise around the face.

Suppose we have an action of some $\Delta $ on a set $S$ such that there are
two orbits of $\langle xy\rangle $, each consisting of $|S|/2$ elements. A 
\emph{januarial\/} $J$ is the resulting coset diagram. In particular, a
januarial is a diagram with two $xy$-faces. The examples in Figure 1 and 2\
are thus januarials. For a nice introduction to januarials, see \cite{jan},
from which the following concepts are taken.

Let $S_1$ and $S_2$ be the discs that result from taking the closures of the
two $xy$-faces.

It is convenient to collapse superfluous structure in a graph or diagram.
Let $J$ be a januarial arising from the coset graph $\Gamma $. Collapsing
all the $y$-faces to a point gives the \emph{companion graph\/} $\Gamma
^{\prime }$ and the \emph{companion diagram\/} $J^{\prime }$. Let $%
S_{1}^{\prime },S_{2}^{\prime }$ be the result of applying this to the discs 
$S_{1},S_{2}$.

\begin{figure}[h]
  \centering
\BoxedEPSF{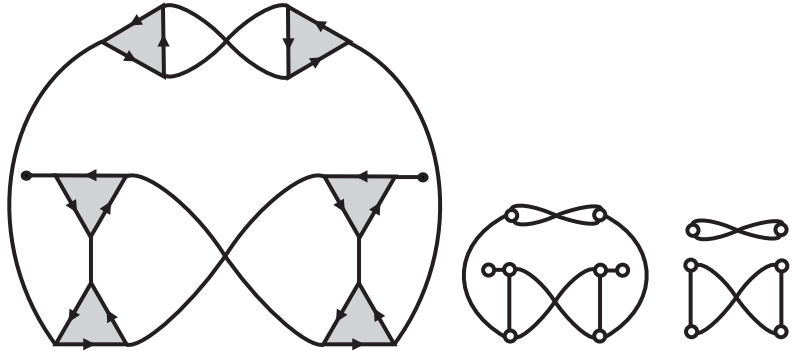 scaled 750}
  \caption{A graph of a
januarial $J$, its \emph{companion graph\/} $\Gamma ^{\prime }$ and the 
\emph{common graph\/} $\Upsilon$.}
\end{figure}

Let $g$ be the genus of the surface in which the januarial $J$ is embedded.
Let $R_{i}$ be a small closed neighbourhood of $S_{i}^{\prime }$ in $%
J^{\prime }$. Then $R_{i}$ is a closed surface with boundary. Let $g_{i}$ be
its genus and $h_{i}$ is the number of connected components in the boundary.

The \emph{common graph\/} $\Upsilon $ is the intersection $S_{1}^{\prime
}\cap S_{2}^{\prime }$. The following is in \cite[Lemma 3.5]{jan}:

\begin{lemma}
Let $P_{1}$ (resp. $P_{2}$) be the set of all paths that traverse successive
edges in $\Upsilon $ in the directions they are traversed by $S^{\prime
}_{1} $ (resp. $S^{\prime }_{2}$) in such a way that whenever such a path
reaches a vertex, it continues along the right-most of the other edges in $%
\Upsilon$ incident with that vertex. (The next edge is necessarily traversed
by $S_{1}$ (resp. $S_{2}$) in that direction.) All such paths close up into
circuits, and $P_{1}$ (resp. $P_{2}$) partitions $\Upsilon$, in the sense
that the union of the circuits is $\Upsilon $ and no two share an edge. The
cardinality of $P_{1}$ (resp. $P_{2}$) is $h_{2}$ (resp. $h_{1}$).
\end{lemma}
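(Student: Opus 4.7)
The plan is to identify the circuits in $P_1$ with the boundary components of $R_2$, so that $|P_1|=h_2$ and the partition property for $\Upsilon$ drop out together.

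First I would fix $R_2$ as a regular closed neighbourhood of $S_2'$ in $J'$; by hypothesis $\partial R_2$ is a disjoint union of $h_2$ simple closed curves lying just outside $\partial S_2'$. Because $\Upsilon$ is the locus along which $\partial S_1'$ and $\partial S_2'$ coincide, each such boundary curve hugs the $\Upsilon$-edges on the $S_1'$ side and therefore projects to a closed walk in $\Upsilon$.

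Next I would check directions edge by edge. For $e\in\Upsilon$ the two sides of $e$ lie in $S_2'$ and $S_1'$ respectively, so $S_1'$ and $S_2'$ induce opposite orientations on $e$. The parallel arc of $\partial R_2$ runs on the $S_1'$ side of $e$, and a short computation with the surface orientation shows that the induced traversal of $e$ matches the direction in which $S_1'$ traverses $e$ (after the reversal that turns $\partial R_2$, oriented with $R_2$ on the left, into a walk on $\Upsilon$ in the $S_1$-direction).

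The main work is the local analysis at each vertex $v\in\Upsilon$. The germs of edges of $J'$ at $v$ carry a cyclic order coming from the orientation of the ambient surface, and $\Upsilon$-edges are interleaved there with non-$\Upsilon$-edges and with $S_1$- or $S_2$-sectors. A boundary arc of $R_2$ entering $v$ alongside an edge $e$ must continue alongside whichever $\Upsilon$-edge is reached first by sweeping around $v$ along the $S_1'$-side of $S_2'$. A careful case-check, using that $\partial R_2$ traces out a collar of $S_2'$ from the outside, identifies this next edge with the right-most of the other $\Upsilon$-edges at $v$ relative to the direction of travel, which is exactly the rule defining $P_1$. I expect this orientation bookkeeping -- particularly at vertices where many $\Upsilon$- and non-$\Upsilon$-edges interleave -- to be the principal technical difficulty.

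Finally, because each $e\in\Upsilon$ appears on $\partial S_2'$ exactly once -- its other side being $S_1'$, by definition of $\Upsilon$ -- each edge of $\Upsilon$ is swept exactly once by $\partial R_2$. The projections of the $h_2$ components of $\partial R_2$ are therefore the circuits of $P_1$; they close up, exhaust $\Upsilon$, and share no edge. The symmetric argument with $R_1,\partial R_1$ replacing $R_2,\partial R_2$ yields the corresponding statements for $P_2$ and $h_1$.
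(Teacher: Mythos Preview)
The paper does not actually prove this lemma: it is quoted verbatim from \cite[Lemma 3.5]{jan} (Conder--Riley) and stated without proof, as background for the later computations. So there is no ``paper's own proof'' to compare your attempt against.

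That said, your approach---identifying the circuits of $P_1$ with the boundary components of the regular neighbourhood $R_2$ of $S_2'$---is exactly the natural topological reading of the definitions, and is essentially the argument one finds in the cited Conder--Riley paper. Your outline is sound: the key points are that (i) each edge of $\Upsilon$ lies on $\partial S_2'$ exactly once, so $\partial R_2$ sweeps it once, and (ii) the collar of $S_2'$ from the $S_1'$-side dictates the ``right-most'' turning rule at each vertex. You are right that the local orientation bookkeeping at vertices where several $\Upsilon$- and non-$\Upsilon$-edges interleave is where the care is needed; once that is checked, the bijection between components of $\partial R_2$ and circuits of $P_1$ gives both the partition and $|P_1|=h_2$ simultaneously.
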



A januarial is of \emph{simple type\/} if $\Upsilon$ is composed of $h$
disjoint simple circuits. In this case $h=h_1=h_2$ and we denote it by $%
(h,g_1,g_2)$. Otherwise, the januarial is of \emph{general type\/} $%
((h_{1},g_{1}),(h_{2},g_{2}))$.

The following are Lemmas 3.1, 3.4 and 3.6 of \cite{jan}:

\begin{lemma}
\label{1}Twice the genus $g$\ of a januarial equals the number $E$ of $x$%
-edges which are not loops minus the number $V$ of $y$-faces.
\end{lemma}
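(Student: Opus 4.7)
The plan is to deduce the formula directly from Euler's formula applied to the $2$-cell embedding of the coset graph $\Gamma$ in the orientable surface of genus $g$. Concretely, I would count the three quantities appearing in $V_{\mathrm{tot}}-E_{\mathrm{tot}}+F_{\mathrm{tot}}=2-2g$ from the defining data of the januarial.

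Write $n=|S|$. The vertex count is immediate: $V_{\mathrm{tot}}=n$. For the edge count, I would observe that each vertex carries a unique outgoing $y$-edge, so there are exactly $n$ $y$-edges in $\Gamma$ (a loop at any fixed point of $y$ is counted once, which is the correct contribution). The remaining edges are the $x$-edges; since fixed points of $x$ carry no $x$-edge by the stated convention, the $x$-edges all have distinct endpoints and are automatically not loops, so their number is $E$. Therefore $E_{\mathrm{tot}}=n+E$. For the faces, the januarial hypothesis that $\langle xy\rangle$ has exactly two orbits on $S$ is equivalent to saying that there are exactly two $xy$-faces in the embedding, so combined with the $V$ $y$-faces this gives $F_{\mathrm{tot}}=V+2$.

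Substituting these three counts into Euler's formula yields
\begin{equation*}
n-(n+E)+(V+2)=2-2g,
\end{equation*}
which collapses at once to $2g=E-V$, as required. There is no real obstacle: the whole argument is bookkeeping, and the only points requiring care are noting that each vertex contributes exactly one outgoing $y$-edge (so loops at $y$-fixed points cause no double-counting) and invoking the defining property of a januarial to pin down the number of $xy$-faces at two. The phrase ``$x$-edges which are not loops'' is then automatically vacuous for $J$ itself, but is phrased this way to make the formula portable to the companion diagram $J'$, where $x$-edges can become loops after collapsing $y$-faces.
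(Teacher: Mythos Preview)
Your argument is correct. The paper does not actually supply its own proof of this lemma: it is quoted verbatim as Lemma~3.1 of \cite{jan} (Conder--Riley), so there is nothing in the present paper to compare against directly. That said, the Euler-characteristic bookkeeping you carry out is exactly the standard route and is essentially how the cited source proves it as well: count $n$ vertices, $n$ $y$-edges plus $E$ $x$-edges, and $V+2$ faces, then read off $2g=E-V$ from $V_{\mathrm{tot}}-E_{\mathrm{tot}}+F_{\mathrm{tot}}=2-2g$. Your side remarks---that the coset-graph convention forbids $x$-loops so the qualifier ``which are not loops'' is vacuous for $J$ but meaningful for $J'$, and that $y$-loops at fixed points are counted once---are accurate and worth keeping.
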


\begin{lemma}
\label{2}The genus of a januarial $J$ of simple type is $g=g_{1}+g_{2}+h-1$.
\end{lemma}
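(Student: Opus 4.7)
The plan is to compute the Euler characteristic of the ambient surface $\Sigma$ of genus $g$ by applying inclusion--exclusion to the cover $\Sigma = R_1 \cup R_2$. Since each $y$-face of $J$ is a topological disc, collapsing it to a point is a homeomorphism of the underlying surface; hence $J'$ embeds in the same closed orientable surface $\Sigma$ of genus $g$ as $J$. In $\Sigma$, the two regions $S_1'$ and $S_2'$ together cover everything (inherited from $S_1 \cup S_2 \cup \bigcup(y\text{-faces}) = \Sigma$ after collapse), and by choosing the closed neighborhoods appropriately we may arrange that $R_1 \cup R_2 = \Sigma$ with $R_1 \cap R_2$ equal to a regular neighborhood of $\Upsilon = S_1' \cap S_2'$.

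Under the simple-type hypothesis, $\Upsilon$ is a disjoint union of $h$ simple closed curves, so a regular neighborhood of $\Upsilon$ in the orientable surface $\Sigma$ is a disjoint union of $h$ annuli, giving $\chi(R_1 \cap R_2) = 0$. By hypothesis each $R_i$ is a compact orientable surface of genus $g_i$ with $h_i = h$ boundary components, so $\chi(R_i) = 2 - 2g_i - h$. Inclusion--exclusion for Euler characteristic then yields
\[
2 - 2g \;=\; \chi(\Sigma) \;=\; \chi(R_1) + \chi(R_2) - \chi(R_1 \cap R_2) \;=\; (2 - 2g_1 - h) + (2 - 2g_2 - h),
\]
which rearranges to $g = g_1 + g_2 + h - 1$, as required.

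The main obstacle is the topological setup at the start rather than the algebraic calculation: one must check carefully that the small closed neighborhoods $R_1, R_2$ can be chosen simultaneously to cover $\Sigma$ while meeting in an honest annular regular neighborhood of $\Upsilon$, rather than only touching along $\Upsilon$ itself or overlapping more wildly. It is precisely the simple-type hypothesis that avoids pathologies here, because it forces $\Upsilon$ to consist of smoothly embedded circles with no branch vertices; in the general type case one expects $R_1 \cap R_2$ to have nonzero Euler characteristic and a more delicate formula must replace $g_1 + g_2 + h - 1$.
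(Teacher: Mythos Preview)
Your argument is correct. Note, however, that the paper does not itself prove this lemma: it is quoted as Lemma~3.4 of \cite{jan}. What the paper does prove is the general-type formula (Lemma~\ref{4}), and its method specializes to the simple case in a way that differs from yours.

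The paper's route is combinatorial rather than topological. It invokes Lemma~\ref{1}, which gives $g=\tfrac{1}{2}(E-V)$ in terms of the $x$-edge and $y$-face counts of the companion diagram, and Lemma~\ref{3}, which gives $g_i=\tfrac{1}{2}(E_i-V_i-h_i+1)$ in terms of the vertex and edge counts of the subgraph visited by $S_i'$. One then writes $E=E_1+E_2-e$ and $V=V_1+V_2-v$, where $v,e$ are the vertex and edge counts of $\Upsilon$, and substitutes. In the simple case $\Upsilon$ is a disjoint union of $h$ circles, so $v=e$, $h_1=h_2=h$, and the formula drops out.

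Your approach bypasses Lemmas~\ref{1} and~\ref{3} entirely by computing $\chi(\Sigma)$ via inclusion--exclusion on the cover $R_1\cup R_2$ and using the standard formula $\chi(R_i)=2-2g_i-h_i$ for a compact orientable surface with boundary. This is cleaner and more conceptual, and it makes transparent why the simple-type hypothesis matters: it is exactly what forces $\chi(R_1\cap R_2)=0$. The paper's combinatorial bookkeeping, on the other hand, generalizes more mechanically to the general-type case, where the correction term $\alpha=v-e$ is already sitting in the inclusion--exclusion for $E$ and $V$; in your framework the analogous step would require computing $\chi$ of a regular neighborhood of a graph with branch vertices, which is of course also $v-e$ but needs a word of justification.
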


\begin{lemma}
\label{3}The $g_{i}$ satisfy $2-2g_{i}=V_{i}-E_{i}+h_{i}+1$ where $V_{i}$
and $E_{i}$ denote the number of vertices and edges in the subgraph of $%
\Gamma ^{\prime }$ visited by $S_{i}^{\prime }.$
\end{lemma}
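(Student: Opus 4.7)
The plan is to compute the Euler characteristic of the surface-with-boundary $R_i$ in two different ways and equate the results.

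First I would invoke the classification of compact orientable surfaces with boundary. Since $R_i$ is a closed regular (tubular) neighbourhood of $S_i'$ inside the ambient orientable surface $J'$, it is itself a compact orientable surface; by hypothesis its genus is $g_i$ and its boundary has $h_i$ connected components. Hence the first computation is
\begin{equation*}
\chi(R_i)\;=\;2-2g_i-h_i.
\end{equation*}

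Next I would compute $\chi$ combinatorially. A regular neighbourhood deformation retracts onto the subcomplex it thickens, so $R_i$ deformation retracts onto $S_i'$ and $\chi(R_i)=\chi(S_i')$. Now $S_i'$ admits a natural CW structure: its $1$-skeleton is precisely the subgraph of $\Gamma'$ traced out by the boundary of $S_i'$, contributing $V_i$ vertices and $E_i$ edges, and the only $2$-cell is the open disc interior of $S_i'$ itself (the $xy$-face survives the collapse of the $y$-faces as a single $2$-cell attached along its boundary to this subgraph). Thus the second computation is
\begin{equation*}
\chi(S_i')\;=\;V_i-E_i+1.
\end{equation*}

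Equating the two expressions and rearranging yields $2-2g_i=V_i-E_i+h_i+1$, as required. The only step that requires any care is the CW decomposition of $S_i'$: I would argue explicitly that collapsing the $y$-faces does not subdivide or puncture the open disc $S_i\setminus\partial S_i$, since all identifications made during the collapse occur on $\partial S_i$, so that after collapse the interior remains a single $2$-cell attached along the boundary to the visited subgraph. Once this is clear, the identity falls out immediately from the two Euler-characteristic computations; no further case analysis on simple versus general type is needed, since the argument treats $R_i$ only as an abstract orientable surface with boundary.
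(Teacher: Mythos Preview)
Your argument is correct: computing $\chi(R_i)$ once via the classification of compact orientable surfaces with boundary and once via the deformation retraction onto the CW complex $S_i'$ (with its single $2$-cell attached along the visited subgraph) gives the identity immediately, and your care in checking that the collapse of $y$-faces affects only $\partial S_i$ and leaves the open disc as a single $2$-cell is exactly the point that needs justifying.

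As for comparison with the paper: there is nothing to compare. The paper does not prove this lemma; it merely quotes it as Lemma~3.6 of Conder and Riley~\cite{jan}. Your Euler-characteristic argument is in fact essentially the one given there, so you have reconstructed the intended proof.
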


In this paper we first generalize the formula in lemma \ref{2} for obtaining
the genus of a januarial of simple type to that of general type. Then we
address the questions posed by Graham Higman.

\begin{problem}
For a given $k$, what are the possible values for and interrelationships
between $h,g_{1}$ and $g_{2}$ for januarials of simple type? And a similar
question for januarial of general type.

Are there arbitrarily large values of $k$ for which there exist examples
with $h=1$?
\end{problem}

In \cite{qmsm} we describe a method that constructs a januarial from an
action of $\Delta _{k}$ on the projective line $PL(F_{p})$ over the field of
order $p$, with the property that $xy$ acts as permutation of $PL(F_{p})$ of
finite order $\ell$. Necessarily $2\ell =p+1$ for a januarial. It turns out
that there is a polynomial%
\begin{equation*}
f_{\ell }= \left\{%
\begin{array}{c}
{\displaystyle \binom{\ell -1}{0}\theta ^{(\ell -1)/2}-\binom{\ell -2}{1}%
\theta ^{(\ell -3)/2}+\binom{\ell -3}{2}\theta ^{(\ell -5)/2}-...\text{ for
odd }\ell} \\ 
\\ 
{\displaystyle \binom{\ell -1}{0}\theta ^{\ell /2-1}-\binom{\ell -2}{1}%
\theta ^{\ell /2-3}+\binom{\ell -3}{2}\theta ^{\ell /2-5}-...\text{ for even 
}\ell}%
\end{array}%
\right.
\end{equation*}
certain roots of which give linear fractional transformations of $PL(F_p)$.
For, let $\theta $ be a root of $f_{\ell }$ that is not a root of $f_{\ell
/s}$ where $s\mid \ell $. Let $X$, $Y$ be the linear fractional
transformations of $PL(F_{p})$ given by 
\begin{equation*}
X:z\mapsto \frac{az+cd}{cz-a} \text{ and } Y:z\mapsto \frac{ez+fd}{fz+b-e}
\end{equation*}
where $a,b,c,d,e,f\in 
\mathbb{Z}
$, with $\nabla =-(a^{2}+dc^{2})\neq 0$, $r=a(2e-b)+2dcf$ and $%
1+df^{2}+e^{2}-eb=0$ and $\theta \nabla =r^{2}$.

Then the action of $\Delta _{k}$\ on the $p+1$ points of $PL(F_{p})$ gives a
januarial. Write $\bar{x}$ and $\bar{y}$ for the permutations induced by $X$
and $Y$. We say that this januarial has been constructed from the Hecke
group $\Delta_k$. 

In \cite{jan} a technique is proposed for finding januarials. Associates are
constructed of known coset diagrams and investigated. Q. Mushtaq and S.
Mehwish \cite{qmsm} used the method of parametrization described in \cite%
{Qm, poly} to construct januarials from Hecke groups. In this paper we
construct januarials for the natural action of some permutation groups that
are homomorphic images of triangle groups.

\section{\label{6}Relationship between $h_{1},h_{2},g_{1}$ and $g_{2}$}

We now obtain the generalized formula for the genus of a januarial of
general type: 

\begin{lemma}
\label{4}Let $\alpha =v-e$ where $v$ is the number of vertices in $\Upsilon $
and $e$ is the number of edges. Then the genus of a general type januarial $%
((h_{1},g_{1}),(h_{2},g_{2}))$ is%
\begin{equation*}
g_{1}+g_{2}+(h_{1}+h_{2}+\alpha )/2-1.
\end{equation*}
\end{lemma}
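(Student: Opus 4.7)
The plan is to prove the formula by a brief inclusion--exclusion argument, combining Lemmas \ref{1} and \ref{3}. By Lemma \ref{1}, we have $2g = E - V$, where $V$ is the number of vertices in the companion graph $\Gamma'$ (equivalently the number of $y$-faces of $J$) and $E$ is the number of non-loop edges of $\Gamma'$ (equivalently the number of non-loop $x$-edges of $J$). By Lemma \ref{3}, applied to each disc $S_i'$, we have $E_i - V_i = 2g_i + h_i - 1$, where $V_i, E_i$ are the vertex and edge counts of the subgraph $\Gamma'_i$ visited by $S_i'$. It therefore suffices to express $E - V$ in terms of $E_i - V_i$ and $\alpha = v - e$.

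The key observation is that $\Gamma' = \Gamma'_1 \cup \Gamma'_2$ and $\Gamma'_1 \cap \Gamma'_2 = \Upsilon$ as subgraphs of $\Gamma'$. For the first identity, since $S_1'$ and $S_2'$ are the only two $2$-cells in the embedding of $\Gamma'$ on the januarial surface, every vertex of $\Gamma'$ lies in the closure of at least one of them and every edge of $\Gamma'$ is on the boundary of at least one of them. For the second, a vertex (resp. edge) is visited by both $S_1'$ and $S_2'$ precisely when it lies in $S_1' \cap S_2' = \Upsilon$. Consequently, ordinary inclusion--exclusion at the level of vertex and edge counts gives
\begin{equation*}
V = V_1 + V_2 - v, \qquad E = E_1 + E_2 - e.
\end{equation*}

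Substituting these identities into $2g = E - V$ yields
\begin{equation*}
2g = (E_1 - V_1) + (E_2 - V_2) + (v - e) = (2g_1 + h_1 - 1) + (2g_2 + h_2 - 1) + \alpha,
\end{equation*}
which rearranges at once to $g = g_1 + g_2 + (h_1 + h_2 + \alpha)/2 - 1$. The proof is short, and the only (mild) obstacle is justifying the inclusion--exclusion step, which reduces to unpacking the set-theoretic definitions of $\Upsilon$ and of the subgraphs $\Gamma'_i$. As a sanity check, for the simple type the graph $\Upsilon$ is a disjoint union of $h$ simple circuits, so $\alpha = v - e = 0$ and $h_1 = h_2 = h$; the formula collapses to $g = g_1 + g_2 + h - 1$, recovering Lemma \ref{2}.
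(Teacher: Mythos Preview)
Your proof is correct and follows essentially the same route as the paper: combine $2g=E-V$ from Lemma~\ref{1} with $E_i-V_i=2g_i+h_i-1$ from Lemma~\ref{3}, and use inclusion--exclusion $E=E_1+E_2-e$, $V=V_1+V_2-v$. You give more justification for the inclusion--exclusion step and add the sanity check against Lemma~\ref{2}, but the argument is the same.
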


\begin{proof}
By Lemma \ref{1} the genus $g$ of a januarial is $\ g=\frac{1}{2}(E-V)$ and
by Lemma \ref{3} $g_{i}=\frac{1}{2}(E_{i}-V_{i}-h_{i}+1)$. Since $%
E=E_{1}+E_{2}-e$ and $V=V_{1}+V_{2}-v,$ we get $g=\frac{1}{2}(E_{1}+E_{2}-e$ 
$-(V_{1}+V_{2}-v)).$ Simple calculations using the above lead to the
required result $g=g_{1}+g_{2}+\frac{1}{2}(h_{1}+h_{2}+\alpha )-1.$
\end{proof}

\vspace*{1em}


In \cite{qmsm} it is proved that the genus $g$ of januarials constructed
from Hecke Groups is given by 
\begin{equation*}
g=-\frac{p+1-\eta _{y}}{2k}+\frac{1}{4}(p+1-2\eta _{y}-\eta _{x}),
\end{equation*}
where $\eta _{x}$ and $\eta _{y}$ are the number of fixed points of $x$ and $%
y$ respectively. This equation implies that $k$ and $p$ are proportional.
Moreover, the number of fixed points of $x$ and $y$ are the same for fixed
values of $k$ and the prime $p$. Thus:

\begin{lemma}
\label{5} The genus of januarials constructed from Hecke groups for fixed
values of $k$ and the prime $p$, is a fixed value $g_{pk}$.
\end{lemma}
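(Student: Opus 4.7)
The plan is to invoke the genus formula
\begin{equation*}
g=-\frac{p+1-\eta _{y}}{2k}+\frac{1}{4}(p+1-2\eta _{y}-\eta _{x})
\end{equation*}
already established in \cite{qmsm}, and to show that, once $k$ and the prime $p$ are fixed, the integers $\eta_{x}$ and $\eta_{y}$ do not depend on the free parameters $a,b,c,d,e,f$ or on the choice of root $\theta$ of $f_{\ell}$. Substituting into the displayed formula will then give a single value $g_{pk}$.

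For $\eta_{y}$ the argument is structural. Since $Y\in PGL_{2}(F_{p})$ has order $k$, it is either hyperbolic (diagonalisable over $F_{p}$, necessarily with $k\mid p-1$ and $\eta_{y}=2$) or elliptic (diagonalisable only over $F_{p^{2}}$, necessarily with $k\mid p+1$ and $\eta_{y}=0$). For any $k\geq 3$ at most one of these divisibility conditions can hold for a given $p$, so $\eta_{y}$ is determined by the pair $(k,p)$.

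For $\eta_{x}$ I would use the explicit form of $X$. Being an involution of trace $0$ and determinant $\nabla$, its fixed-point equation $cz^{2}-2az-cd=0$ on $PL(F_{p})$ has discriminant $-4\nabla$, so $\eta_{x}=2$ if $-\nabla$ is a nonzero square in $F_{p}$ and $\eta_{x}=0$ otherwise. Because $\theta\nabla=r^{2}$, this condition becomes: $-\theta$ is a square in $F_{p}$. The key point is that every admissible $\theta$ (a root of $f_{\ell}$ in $F_{p}$ that is not a root of any $f_{\ell/s}$) gives the same square class for $-\theta$. I would verify this by noting that the admissible roots form a single Galois orbit under Frobenius in $F_{p}$, corresponding to primitive $\ell$-th values under the standard trace substitution used to obtain $f_{\ell}$, and that being a square is preserved by Frobenius conjugation.

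The main obstacle is this last step: ruling out the possibility that two admissible roots of $f_{\ell}$ in $F_{p}$ lie in opposite square classes. If the Galois-orbit argument above turns out to be insufficient (for instance if $f_{\ell}$ factors non-transitively modulo $p$), I would fall back on analysing the factorisation of $f_{\ell}$ over $F_{p}$ directly and comparing the quadratic symbols $\left(\tfrac{-\theta}{p}\right)$ on a root-by-root basis, using the recursion hidden in the binomial coefficients of $f_{\ell}$.
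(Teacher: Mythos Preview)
Your overall plan coincides with the paper's: both start from the formula
\[
g=-\frac{p+1-\eta_y}{2k}+\frac{1}{4}(p+1-2\eta_y-\eta_x)
\]
cited from \cite{qmsm} and reduce the lemma to the claim that $\eta_x$ and $\eta_y$ depend only on the pair $(k,p)$. The paper simply \emph{asserts} this last point in one sentence (effectively deferring it to \cite{qmsm}) and then states the lemma; you go further and try to justify it. Your treatment of $\eta_y$ is fine.

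The genuine gap is in your argument for $\eta_x$. You propose to show that the admissible roots $\theta$ of $f_\ell$ form a single Galois orbit under Frobenius, and then use that squareness is Frobenius-invariant. But in this construction every admissible $\theta$ already lies in $F_p$; Frobenius $z\mapsto z^p$ is the identity on $F_p$, so each $\theta$ is its own orbit and the invariance statement is vacuous. Nothing in your argument rules out $f_\ell$ having several roots in $F_p$ lying in opposite square classes. Your fallback (``analyse the factorisation of $f_\ell$ over $F_p$ and compare the Legendre symbols root by root'') is the right \emph{direction}, but it is a different argument from the one you sketched, and you have not indicated why the recursion in the coefficients of $f_\ell$ forces all admissible $-\theta$ into the same square class. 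Until that step is supplied, your proof of the $\eta_x$ part is incomplete, whereas the paper sidesteps the issue entirely by citing \cite{qmsm}.
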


From lemmas \ref{2}, \ref{4} and \ref{5} we have:

\begin{proposition}
\label{p}When a januarial is constructed from a Hecke Group with $k$ and the
prime $p$ fixed, we have

\begin{enumerate}
\item $g_{1}+g_{2}+h=g_{pk}+1$ for januarials of simple type and,

\item $g_{1}+g_{2}+(h_{1}+h_{2}+\alpha )/2=g_{pk}+1$ for januarials of
general type.
\end{enumerate}
\end{proposition}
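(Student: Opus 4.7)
The plan is to derive Proposition \ref{p} by directly assembling the three lemmas already established, namely Lemma \ref{2}, Lemma \ref{4}, and Lemma \ref{5}. The key observation is that both statements in the proposition are rearrangements of equations ``genus $=$ something'', and Lemma \ref{5} pins the genus to the fixed value $g_{pk}$ whenever the construction comes from a Hecke group with $k$ and the prime $p$ held constant. So the proof collapses into two short substitutions.

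For part (1), I would begin by invoking Lemma \ref{2} to write $g = g_{1} + g_{2} + h - 1$ for the simple-type januarial. Since the januarial is, by hypothesis, constructed from a Hecke group with $k$ and $p$ fixed, Lemma \ref{5} applies and gives $g = g_{pk}$. Equating and moving the $-1$ to the right produces the desired identity $g_{1} + g_{2} + h = g_{pk} + 1$.

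For part (2), the argument is structurally identical: Lemma \ref{4} yields $g = g_{1} + g_{2} + (h_{1} + h_{2} + \alpha)/2 - 1$ for a general-type januarial, and substituting $g = g_{pk}$ from Lemma \ref{5} and rearranging gives $g_{1} + g_{2} + (h_{1} + h_{2} + \alpha)/2 = g_{pk} + 1$.

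There is no real technical obstacle, since the entire content is already packaged in the three cited lemmas; the proposition is essentially bookkeeping. The only point that deserves a sentence of care is to note that Lemma \ref{5} makes no distinction between simple and general type januarials, so the same constant $g_{pk}$ may legitimately be used on the right-hand side in both cases. I would write the proof as a short paragraph, stating each substitution explicitly and closing with both formulas displayed side by side.
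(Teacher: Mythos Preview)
Your proposal is correct and matches the paper's approach exactly: the paper simply states that the proposition follows from Lemmas \ref{2}, \ref{4}, and \ref{5}, which is precisely the substitution-and-rearrangement you describe. There is nothing to add.
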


That is, for any $p$ and $k$ the sums $g_{1}+g_{2}+h$ and $%
g_{1}+g_{2}+(h_{1}+h_{2}+\alpha )/2$ are constant in these two cases. 

\section{\label{7}Januarials with a simple disjoint circuit}

Analyzing januarials constructed from Hecke groups as described at the end
of Section \ref{januarials} it is observed that for higher values of $k$ the
chances of getting a januarial with simple disjoint circuits decreases.
Moreover the value of $p$ affects the number of circuits in the januarial.
Therefore finding a simple januarial with $h=1$ from a Hecke group becomes
almost impossible as we move to higher values of $k$. These remarks are made
precise in this section.

\begin{proposition}
\label{pro}The valency of each vertex of the subgraph $\Upsilon $ of a $k$%
-januarial is even.
\end{proposition}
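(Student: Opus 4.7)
The plan is to translate the statement into a parity count for a cyclic two-valued sequence. A vertex $v$ of $\Upsilon$ arises from a $y$-face $F$ of $\Gamma$ (which is collapsed to a single point in the companion graph), and I would identify the valency of $v$ in $\Upsilon$ with the number of $x$-edges incident to $F$ that lie on the boundary of both $S_1$ and $S_2$.

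First I would set up notation along the boundary of $F$. Let $v_1, \ldots, v_n$ be the vertices of $F$ in cyclic order, let $y_i$ be the $y$-edge from $v_i$ to $v_{i+1}$, and let $F_i \in \{S_1, S_2\}$ be the unique $xy$-face lying on the outside of $y_i$. This produces a cyclic sequence $(F_1, \ldots, F_n)$, where the entries take values in a set of size two because the januarial has exactly two $xy$-faces.

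Next I would analyse each vertex $v_i$, which sits between $y_{i-1}$ and $y_i$. If $x$ fixes $v_i$, then $v_i$ has degree two in $\Gamma$ and only a single $xy$-sector meets $v_i$, so $F_{i-1} = F_i$. Otherwise the $x$-edge $e_i$ at $v_i$ separates the two $xy$-sectors meeting $v_i$, which are precisely $F_{i-1}$ and $F_i$; hence $e_i \in \Upsilon$ iff $F_{i-1} \ne F_i$. The valency of $v$ in $\Upsilon$ therefore equals the number of cyclic positions at which $(F_1, \ldots, F_n)$ changes value, with a loop at $F$ automatically counted twice because it corresponds to two distinct boundary positions. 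Since the sequence is two-valued and cyclic, this count must be even.

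The main delicacy is the local bookkeeping at $x$-fixed points and at $x$-edges that are loops of the companion graph. For a fixed point one must appeal to the 2-cell embedding structure of Figure \ref{figure:123} to confirm that only a single $xy$-sector meets a degree-two vertex; for a loop one must verify that its two endpoints on the $y$-face sit at two distinct indices of the cyclic sequence, so that the parity count of sign changes agrees with the total count of edge-endpoints contributing to the valency.
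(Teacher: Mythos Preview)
Your argument is correct and takes a genuinely different route from the paper. The paper's proof relies on the circuit decomposition of Lemma~1: it notes that every edge of $\Upsilon$ carries a direction coming from the traversal by $S_1'$, and since the paths in $P_1$ are closed circuits, the edges at any vertex of $\Upsilon$ pair off as incoming/outgoing, forcing even valency. Your approach is instead purely local to a single $y$-face: you encode the edges leaving $F$ toward $\Upsilon$ as the sign changes of a cyclic $\{S_1,S_2\}$-valued sequence along $\partial F$, and use the elementary fact that such a sequence has evenly many sign changes. Your argument is more self-contained (it does not invoke the $P_1$/$P_2$ partition), and it makes the role of loops and $x$-fixed points transparent; the paper's argument is shorter because it piggybacks on the circuit lemma already established. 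Both are valid, and your treatment of the two delicate cases (fixed points forcing $F_{i-1}=F_i$, and loops contributing at two distinct boundary positions) is exactly what is needed to make the sign-change count match the valency.
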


\begin{proof}
Each vertex in the subgraph $\Upsilon $ is a $y$-face of the $k$-januarial
and the edges are $x$-monogons. Let $E$ be an $x$-monogon between the
vertices $U$ and $V$ of $\Upsilon .$

Since $\Upsilon =S_{1}^{\prime }\cap S_{2}^{\prime },$ the $x$-monogon $E$
lies both in the restriction of $S_{1}^{\prime }$ to $\Upsilon $ and in the
restriction of $S_{2}^{\prime }$ to $\Upsilon .$ If in the restriction of $%
S_{1}^{\prime }$ to $\Upsilon$ the monogon $E$ takes $U$ to $V$ then in the
restriction of $S_{2}^{\prime }$ to $\Upsilon$ we have that $E$ takes $V$ to 
$U.$ In other words $E$ is unidirectional in both the restrictions.

Now for the vertex $U$ in the restriction of $S_{1}^{\prime }$ to $\Upsilon$
the monogon $E$ is an outgoing edge for some circuit in $P_{1}.$ There must
then be a unidirectional incoming edge $E^{\prime }$ at $U$ in that circuit.

This implies that at any vertex of $\Upsilon$, incident edges exist
pairwise. So the valency of each vertex of the subgraph $\Upsilon $ of a $k$%
-januarial is even.
\end{proof}

\begin{theorem}
Every $3$-januarial is of simple type.
\end{theorem}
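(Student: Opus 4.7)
My plan is to combine Proposition \ref{pro} with the constraint $k=3$ to force the common graph $\Upsilon$ to be $2$-regular, from which the simple-type decomposition follows automatically. The statement is really a valency argument: every vertex of $\Upsilon$ is a $y$-face of $J$, and when $k=3$ such a face has very few incident $x$-edges.

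First I would bound the valency of a vertex of $\Upsilon$ from above. Since $y^{3}=1$, every $y$-face of a $3$-januarial has boundary label $y^{n}$ with $n\mid 3$, so $n\in\{1,3\}$; in particular each $y$-face has at most three underlying vertices, and each of these contributes at most one incident $x$-edge of the coset diagram. A half-edge count in the companion $\Gamma'$ (valid even when an $x$-edge joins two vertices of the same $y$-face and so becomes a loop at the corresponding vertex of $\Gamma'$) then gives that every vertex of $\Upsilon$ has valency at most $3$ in $\Upsilon$.

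Next I would invoke Proposition \ref{pro}: the valency of each vertex of $\Upsilon$ is even. Combined with the bound above this forces the valency to be at most $2$. Applying Lemma 3.5 of \cite{jan}, the circuits of $P_{1}$ partition $\Upsilon$, so every vertex of $\Upsilon$ lies on some circuit and therefore has valency at least $2$. Hence every vertex has valency exactly $2$.

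Finally, a multigraph in which every vertex has valency exactly $2$ is a disjoint union of simple circuits, one per connected component (at each vertex there is a unique unused incident edge along which to continue, so no vertex is revisited before the circuit closes). Applying this to $\Upsilon$ gives a decomposition into $h$ disjoint simple circuits, which is the definition of a januarial of simple type. The main thing needing care is the valency upper bound itself — one must handle monogon $y$-faces arising from fixed points of $y$, and $x$-edges internal to a single $y$-face — but the half-edge count caps the valency at $n\leq 3$ uniformly across these cases, so the argument remains clean.
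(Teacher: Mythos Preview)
Your proposal is correct and follows essentially the same route as the paper: combine the even-valency result of Proposition~\ref{pro} with the fact that a $y$-face in a $3$-januarial has at most three incident $x$-edges to force each vertex of $\Upsilon$ to have valency exactly $2$, whence $\Upsilon$ decomposes into disjoint simple circuits. Your write-up is more explicit than the paper's (which compresses the upper and lower valency bounds into a single sentence), but the underlying argument is the same.
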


\begin{proof}
By Proposition \ref{pro}, at any vertex of the subgraph $\Upsilon $ of a $3$%
-januarial only two edges can be incident while moving along the orbit: one
pointing towards the vertex and the other pointing away from the vertex. So
only disjoint simple circuits will be formed in the subgraph $\Upsilon$,
giving it a simple $3$-januarial.
\end{proof}

\begin{figure}
  \centering
\BoxedEPSF{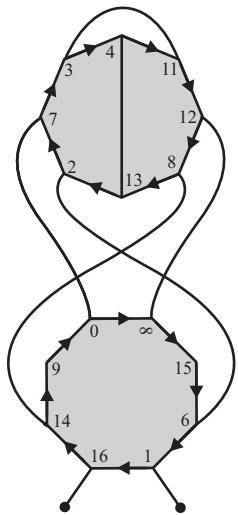 scaled 500}
  \caption{A diagram for $\Delta(17,17,8)$. Note that there is an
    $x$-edge connecting the vertices $4$ and $13$.} 
\label{figure:D(17,17,8)}
\end{figure}

\vspace*{1em}

When $k\geq 4$ the likelihood of getting simple $k$-januarials decreases
expeditiously. For an example we construct an $8$-januarial from the Hecke
group $\Delta _{8}$ acting on $PL(F_{17})$. We have $\ell =9$ and hence%
\begin{equation*}
f_{9}=1\theta ^{4}-8\theta ^{3}+21\theta ^{2}-20\theta +5.
\end{equation*}%
The roots of $f_{9}$ yielding januarials are $9,15$ and $16$. For an $8$%
-januarial, if we consider the root $\theta =9$ we get the linear fractional
transformations 
\begin{equation*}
X:z\mapsto \frac{z+10}{10z-1}\text{ and }Y:z\mapsto \frac{4}{4z+8}
\end{equation*}%
%
The action of $X$ and $Y$ on $PL(F_{17})$ gives 
\begin{eqnarray*}
\bar{x} &=&(0,7)(1,5)(2,6)(3,11)(4,13)(8,14)(9)(10,16)(12,\infty )(15) \\
\bar{y} &=&(0,9,14,16,1,6,15,\infty )(2,13,8,12,11,4,3,7)(5)(10) \\
\bar{x}\bar{y} &=&(0,4,7,19,13,1,17,5,22,11,14,18)(\infty
,20,6,16,8,3,9,15,10,2,12,21).
\end{eqnarray*}%
Hence the coset diagram $D(17,17,8)$ is given by Figure
\ref{figure:D(17,17,8)}. The two partitions of $\Upsilon $ into
circuits is drawn in Figure \ref{figure:D(17,17,8)_common}. 

%
%
%
%

\begin{figure}
  \centering
\BoxedEPSF{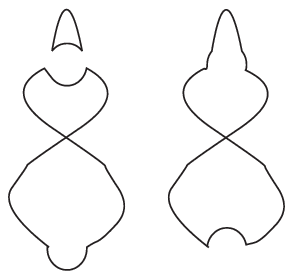 scaled 750}
  \caption{}
\label{figure:D(17,17,8)_common}
\end{figure}

%
This is an 8-januarial of general type with $h_{1}=2$ and $h_{2}=1.$ It has $%
8$ edges which are not loops and $4$ $y$-faces, giving genus $g=\frac{1}{2}%
(8-4)=2.$ Calculating the genera using $2-2g_{i}=V_{i}-E_{i}+h_{i}+1\,$\
gives $g_{1}=\frac{1}{2}(1-2+5-2)=1$ and $g_{2}=\frac{1}{2}(1-4+6-1)=1.$
Alternatively by Lemma 5 the genus of this general type januarial $%
((2,1),(1,1))$ is $g=1+1+(2+1-1)/2-1=2.$

\begin{figure}
  \centering
\BoxedEPSF{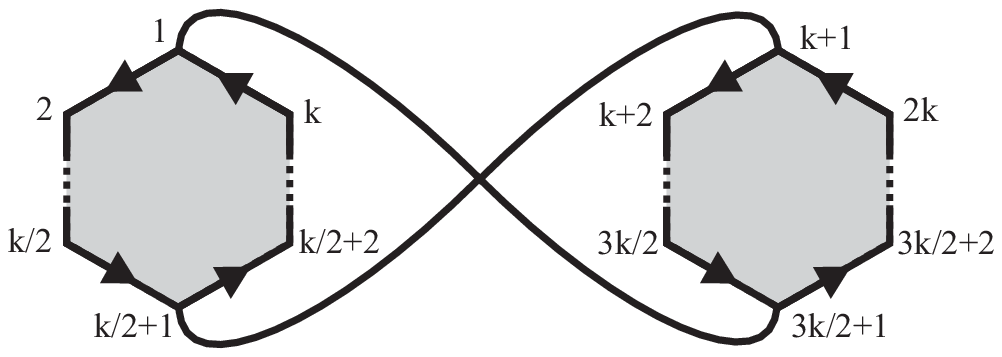 scaled 750}
  \caption{}
\label{figure:even_simple}
\end{figure}

From the above example it follows that the action of Hecke groups $\Delta_k$
for larger $k$'s on smaller prime fields do not yield simple januarials.
This leads to the question of whether there exist examples of simple
januarials for larger values of $k$. If so, then this answers Higman's
question of whether there are arbitrarily large values of $k$ for which
there exist examples with $h=1$:

\begin{theorem}
\label{t}There exist simple januarials with $h=1$ for all values of $k$.
\end{theorem}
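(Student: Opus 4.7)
The plan is to construct, for each integer $k \geq 2$, an explicit januarial with $h = 1$ by exhibiting permutations $x$ (an involution) and $y$ (of order $k$) on a suitably chosen finite set $S$, such that $\langle xy \rangle$ has exactly two orbits of equal size. Any such data gives a januarial in the sense of Section \ref{januarials}; the work is then to arrange $x$ and $y$ so that the common graph $\Upsilon$ is a single simple circuit.

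First I would design the skeleton around the common graph $\Upsilon$ that I want to appear. For $h = 1$, $\Upsilon$ must consist of a cyclic sequence of $y$-faces $F_1, F_2, \ldots, F_n$ joined by $n$ $x$-monogons, with no other $x$-monogons contributing to the interface; every remaining $x$-edge must go between the two distinct $xy$-faces $S_1$ and $S_2$. Figure \ref{figure:even_simple} already illustrates the construction for even $k$: a ``ladder'' whose top and bottom boundaries realize the two $xy$-cycles, whose rungs are $x$-edges linking opposite sides, and a single distinguished strip of $y$-faces at the ends forming the desired interface. For odd $k$, an extra twisted rung (or a half-step at the end of the ladder) repairs the parity mismatch; the same skeletal pattern then works uniformly.

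Once the diagram is drawn, I would read off explicit permutations on the vertex set and verify the defining relations: $x^2 = 1$ and $y^k = 1$ are immediate from the prescribed cycle structure, while tracing the top and bottom boundaries of the ladder confirms that $\langle xy \rangle$ has precisely two orbits of equal length $\ell$, giving $(xy)^{\ell} = 1$ and hence a genuine $\Delta(2,k,\ell)$-action yielding a januarial.

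The main obstacle is checking that $\Upsilon$ is \emph{exactly} the designed cycle and nothing more: no extra $y$-face may be shared between $S_1$ and $S_2$, and no stray $x$-monogon may slip into the interface. Proposition \ref{pro} already forces each vertex of $\Upsilon$ to have even valency, so the remaining task is the local verification that every interface $y$-face has valency exactly $2$ in $\Upsilon$. Together with the connectedness of the designed cycle this forces $\Upsilon$ to be a single simple circuit, so $h = 1$ and the januarial is of simple type, as required.
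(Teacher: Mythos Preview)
Your overall strategy---build an explicit action of $\Delta(2,k,\ell)$ on a finite set and then check that the common graph $\Upsilon$ is a single simple circuit---is exactly the route the paper takes. But what you have written is a plan, not a proof, and the plan is vague at precisely the point where the work lies.

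For even $k$ the paper writes down concrete permutations on $2k$ points, namely $\bar{x}=(1,3k/2+1)(k/2+1,k+1)$ and $\bar{y}=(1,2,\ldots,k)(k+1,\ldots,2k)$. There are thus only two $y$-faces and only two $x$-edges in the whole diagram; this is not a ``ladder whose rungs are $x$-edges linking opposite sides'' as you describe it. The companion graph has two vertices joined by two edges, both belonging to $\Upsilon$, and $h=1$ follows at once. Your reading of Figure~\ref{figure:even_simple} does not match the actual construction.

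The genuine gap is the odd case. You assert that ``an extra twisted rung (or a half-step at the end of the ladder) repairs the parity mismatch'' so that ``the same skeletal pattern then works uniformly''. But the even-$k$ permutations use the positions $k/2+1$ and $3k/2+1$, which are not integers when $k$ is odd, and you supply no replacement. The paper does \emph{not} repair this by a local tweak of the even picture: it abandons the $2k$-point construction and passes to an action of $\Delta(2,k,2k)$ on $4k$ points, with four $y$-faces rather than two, and then verifies $h=1$ from the new companion graph (Figures~\ref{figure:odd_simple} and~\ref{figure:odd_simple_common}). Until you actually produce permutations for odd $k$ and verify both the two-equal-orbit condition on $\langle xy\rangle$ and the single-circuit condition on $\Upsilon$, the odd half of the theorem remains unproved.
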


By taking into consideration the definition of januarials and our
requirement that we have one simple disjoint circuit in the common graph, in
the proof below we construct the $k$- januarials for even and odd $k$
separately.

\begin{proof}
Consider the diagram in Figure \ref{figure:even_simple}. It is easy to see it as a permutation action of
the subgroup generated by 
\begin{equation*}
\bar{x}=\left( 1,3k/2+1\right) \left( k/2+1,k+1\right) \text{ and }\bar{y}%
=\left( 1,2,\ldots ,k\right) \left( k+1,k+2,\ldots ,2k\right) 
\end{equation*}%
of $S_{2k}.$%

%
%
This is a $k$-januarial for even values of $k$. The companion graph and the
two partitions of the subgraph $\Upsilon$ are show in Figure \ref{figure:even_simple_common}.

\begin{figure}[h]
  \centering
\BoxedEPSF{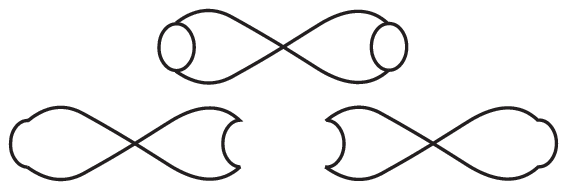 scaled 750}
  \caption{}
\label{figure:even_simple_common}
\end{figure}

%
This shows that the januarial is of simple type with $h=1$. For odd values of $k$
consider the diagram in Figure \ref{figure:odd_simple} which shows a permutation action of $\Delta
(2,k,2k)$ on 4k points.

\begin{figure}[h]
  \centering
\BoxedEPSF{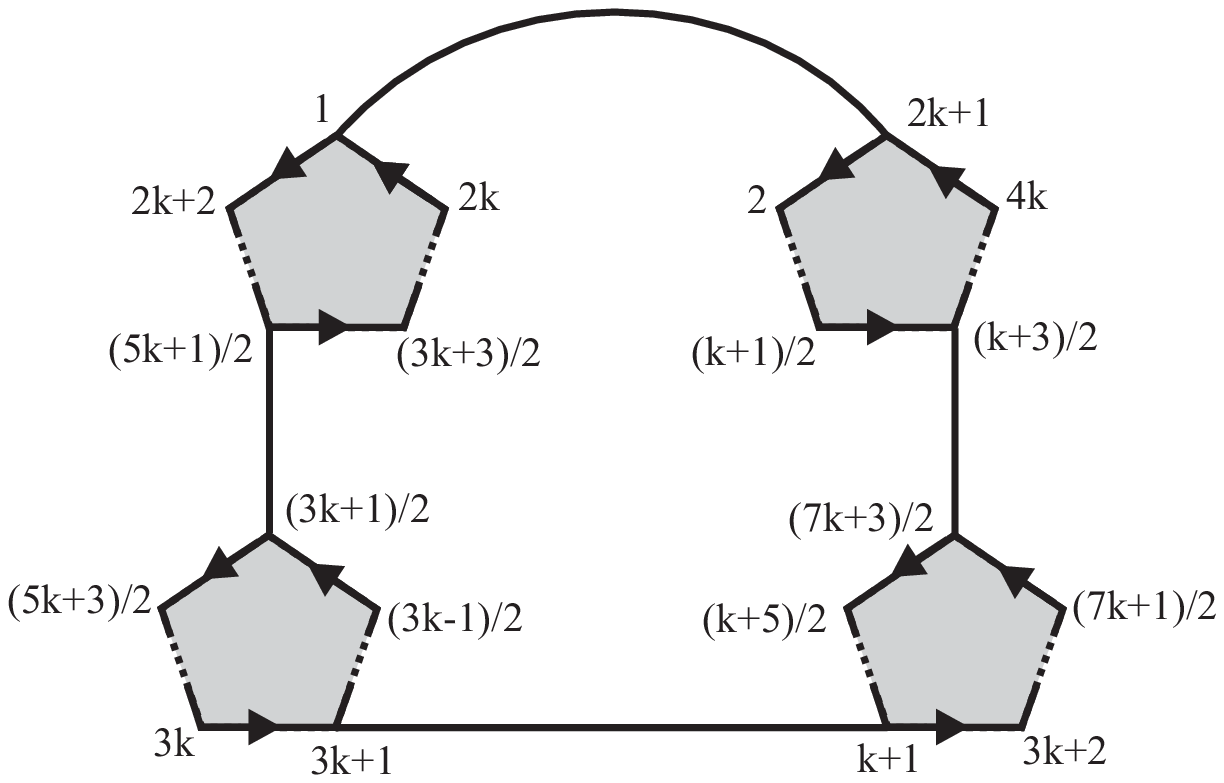 scaled 750}
  \caption{}
\label{figure:odd_simple}
\end{figure}

%
It is a $k$-januarial and the two faces of $xy$ are labeled by $1$ $2$ $%
3\ldots \ 2k$ and $2k+1$ $2k+2$ $2k+3\ldots \ 4k$. The companion graph and
the two partitions of the subgraph $\Upsilon $ are shown in Figure \ref{figure:odd_simple_common}.

\begin{figure}[h]
  \centering
\BoxedEPSF{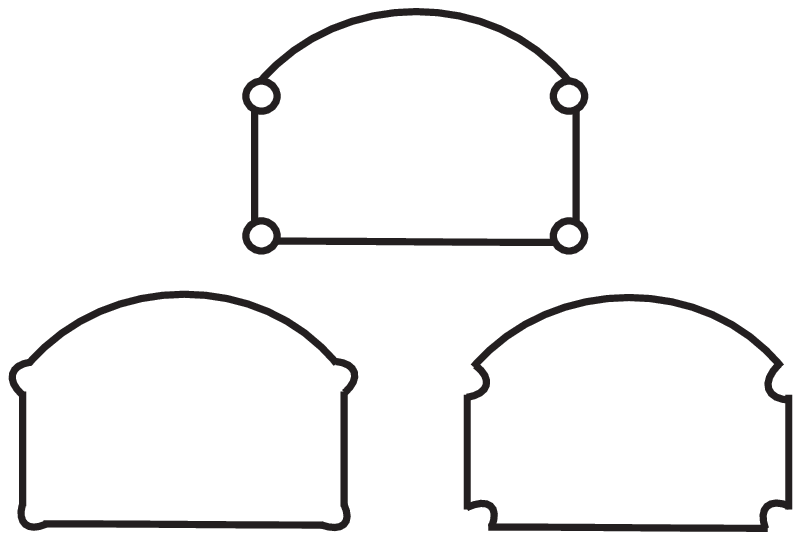 scaled 750}
  \caption{}
\label{figure:odd_simple_common}
\end{figure}

%
%
This shows that the januarial is of simple type with $h=1$.
\end{proof}

\vspace*{1em}

These januarials are the result of actions of $\Delta (2,k,\ell )$ on the
set of $2k$ and $4k$ points for even and odd values of $k$ respectively,
which can easily be extended to larger finite sets in many ways. For
example, we can insert $k$-polygons in the diagrams above while maintaining
the symmetry. This technique is also used by Q. Mushtaq and F. Shaheen in 
\cite{qmf}. Thus, there exist many more januarials with the required
condition.

\end{document}